\begin{document}
\newtheorem{theorem}{Theorem}
\newtheorem{lemma}{Lemma}
\newtheorem{prop}{Proposition}
\newtheorem{corollary}{Corollary}
\newtheorem{conjecture}{Conjecture}
\numberwithin{equation}{section}
\newcommand{\dif}{\mathrm{d}}
\newcommand{\intz}{\mathbb{Z}}
\newcommand{\ratq}{\mathbb{Q}}
\newcommand{\natn}{\mathbb{N}}
\newcommand{\comc}{\mathbb{C}}
\newcommand{\rear}{\mathbb{R}}
\newcommand{\prip}{\mathbb{P}}
\newcommand{\uph}{\mathbb{H}}

\title{A Lower Bound for the Large Sieve with Square Moduli}
\author{Stephan Baier \and Sean B. Lynch \and Liangyi Zhao}
\date{\today}

\begin{abstract}
We prove a lower bound for the large sieve with square moduli.
\end{abstract}

\maketitle

\noindent {\bf Mathematics Subject Classification (2010)}: 11B57, 11J25, 11J71, 11L03, 11L07, 11L40. \newline

\noindent {\bf Keywords}: large sieve, Farey fractions in short intervals, estimates on exponential sums

\section{Introduction}

The classical large sieve inequality states that for $Q, N \in \natn$, $M\in \intz$ and any sequence of complex numbers 
$\{a_n\}$,
\[ \sum_{q=1}^Q \sum_{\substack{a=1 \\ (a,q)=1}}^q \left| \sum_{n=M+1}^{M+N} a_n e \left( \frac{an}{q} \right) \right|^2 \leq (Q^2+N-1) \sum_{n=M+1}^{M+N} |a_n|^2 . \]

In \cite{Zha}, the third author studied the large sieve inequality for square moduli and conjectured that for
any $\varepsilon>0$,
\begin{equation} \label{conj}
  \sum_{q=1}^Q \sum_{\substack{a=1 \\ (a,q)=1}}^{q^2} \left| \sum_{n=M+1}^{M+N} a_n e \left( \frac{an}{q^2} \right) 
  \right|^2 \ll Q^{\varepsilon} (Q^3+N) \sum_{n=M+1}^{M+N} |a_n|^2,
\end{equation}
where the implied constant depends only on $\varepsilon$. In his undergraduate thesis, 
the second author numerically investigated the validity of \eqref{conj}.  
A natural question is whether \eqref{conj} can hold with the factor 
$Q^{\varepsilon}$ removed. In this note,
we answer this question in the negative. More precisely, we prove the following.

\begin{theorem} \label{main} For every $\varepsilon>0$, there are infinitely many natural numbers $Q$ such that for 
suitable $M\in \mathbb{Z}$, $N\in \mathbb{N}$ and sequences
$\{a_n\}$ of complex numbers, we have
\begin{equation} \label{goal}
\sum_{q=1}^Q \sum_{\substack{a=1 \\ (a,q)=1}}^{q^2} \left| \sum_{n=M+1}^{M+N} 
a_n e \left( \frac{an}{q^2} \right) \right|^2 \ge DQ^{\frac{\log 2}{(1+\varepsilon)\log \log Q}}
(Q^3+N) \sum_{n=M+1}^{M+N} |a_n|^2
\end{equation}
for some absolute positive constant $D$. 
\end{theorem}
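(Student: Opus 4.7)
The plan is to find, for infinitely many $Q$, an exceptionally clustered set of primitive fractions $a/q^2$ in a short interval and use it to build a test sequence violating \eqref{goal} in the conjectured (unsmoothed) form. Restrict to primorials $Q = \prod_{p \le y} p$, for which $\omega(Q) = \pi(y) \sim \log Q/\log\log Q$ by the prime number theorem and hence $d(Q) = 2^{\omega(Q)} = Q^{\log 2/\log\log Q}$. This exceeds the target factor $Q^{\log 2/((1+\varepsilon)\log\log Q)}$ by a power of $Q$ that tends to infinity, so it suffices to produce a lower bound of size $\gg d(Q)(Q^3+N)\|a\|^2$ at each such primorial.

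The crux is to exhibit a point $\gamma\in[0,1)$ and a length $L>0$ such that the number $K$ of primitive fractions $a/q^2$ (with $q\le Q$, $(a,q)=1$) lying in $[\gamma-L/2,\gamma+L/2]\pmod 1$ satisfies $K\ge c_0\, d(Q)(1+Q^3L)$ for an absolute constant $c_0>0$. The multiplicative structure of $Q$ is to be exploited here: for each divisor $d\mid Q$ one produces, by a Chinese remainder construction lifting $\gamma$ prime by prime, a primitive fraction $a_d/q_d^2$ in a narrow neighbourhood of $\gamma$, the coprimality condition $(a_d,q_d)=1$ being arranged locally at each prime. Sharp estimates on the resulting exponential sums (reflected in the MSC classifications 11L03, 11L07, 11L40) are needed both to control error terms and to verify that the $2^{\omega(Q)}$ fractions produced are genuinely distinct.

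Given such a cluster, set $N=\lceil 1/L\rceil$ and take $a_n=e(-\gamma n)$ for $M+1\le n\le M+N$ and $a_n=0$ otherwise, so $\sum_n|a_n|^2=N$. The standard geometric-series bound yields $|\sum_n a_n e(an/q^2)|^2\gg N^2$ for each of the $K$ clustered fractions. Consequently the left side of \eqref{goal} is $\gg KN^2\gg d(Q)\cdot N(Q^3+N)$, and this exceeds $D\, Q^{\log 2/((1+\varepsilon)\log\log Q)}(Q^3+N)N$ as soon as $Q$ is large enough, which proves \eqref{goal} along the subsequence of primorials.

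The main obstacle is clearly the clustering step. Naive choices such as $\{1/d^2:d\mid Q\}$ produce $d(Q)$ primitive fractions but spread them across $[0,1]$ rather than in a short interval, while a uniform-distribution heuristic for Farey fractions with square denominators suggests that no single $\gamma$ should automatically admit so much extra concentration. Overcoming this requires a delicate Diophantine construction that uses the full freedom in choosing $a_d\pmod{q_d^2}$ for each divisor $d\mid Q$, combined with non-trivial cancellation estimates for the associated exponential sums in short intervals, which is exactly the technical content announced by the keywords.
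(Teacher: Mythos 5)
Your overall framework is the same as the paper's: restrict to primorial-type $Q$, find a cluster of $K\gg 2^{\omega(Q)}$ primitive fractions $a/q^2$ within distance $O(1/Q^3)$ of a single point $\gamma$, take $a_n=e(-\gamma n)$ with $N\asymp Q^3$, and conclude by the trivial non-cancellation bound $|\sum_n e(n(a/q^2-\gamma))|\gg N$. The reduction from the clustering statement to the theorem is correct as you present it, and your observation that $2^{\omega(Q)}=Q^{(1+o(1))\log 2/\log\log Q}$ dominates the target factor is also correct.

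However, there is a genuine gap: the clustering step, which you yourself identify as ``the main obstacle,'' is never actually carried out. You gesture at ``a Chinese remainder construction lifting $\gamma$ prime by prime'' producing one fraction per divisor of $Q$, and then say that completing it ``requires a delicate Diophantine construction'' together with ``non-trivial cancellation estimates for the associated exponential sums.'' No such construction is exhibited, no candidate $\gamma$ is specified, and the claim that the $2^{\omega(Q)}$ fractions land in a common interval of length $\asymp Q^{-3}$ is exactly what needs proof; as written, the argument assumes the theorem's entire content. Moreover, the appeal to exponential sum estimates is a red herring: the clustering can be done with completely elementary means. The paper takes $\gamma=1/Q$ with $Q=p_1\cdots p_m$ odd and squarefree, and considers the $2^m$ solutions of $q^2\equiv 1\pmod Q$ with $Q<q\le 2Q$ (CRT: two square roots of $1$ modulo each $p_i$). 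Writing $q^2=1+aQ$ gives $|a/q^2-1/Q|=1/(q^2Q)\le 1/Q^3$ precisely because $q>Q$, and $(a,q)=1$ is automatic since any common prime divisor of $a$ and $q$ would divide $aQ=q^2-1$ and hence divide $1$. Note also that your version keeps $q\le Q$, where this mechanism fails for small $q$ (there $1/(q^2Q)\gg 1/Q^3$); the paper instead proves the bound for the dyadic range $Q<q\le 2Q$ and observes that this suffices. So your proposal correctly predicts the architecture of the proof but leaves its only nontrivial ingredient unproved.
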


The above theorem shows that the $Q^{\varepsilon}$ factor in \eqref{conj} cannot be discarded or even replaced by a power of logarithm.  We note that the best-known upper bound for the left-hand side of \eqref{conj} is
$$
\ll (QN)^{\varepsilon}\left(Q^3+N+\min\left\{\sqrt{Q}N,\sqrt{N}Q^2\right\}\right) \sum_{n=M+1}^{M+N} |a_n|^2
$$
due to the first and third authors \cite{SBLZ2}. \newline

The large sieve inequality for square (and quadratic) moduli has many applications. For example, it is used in the study of the Bombieri-Vinogradov theorem for square moduli \cite{baier-zhao3}, elliptic curves over finite fields \cites{banks-pappalardi-shparlinski, ISLZ}, Fermat quotients \cite{bourgain-ford-konyagin-shparlinski}, and the representation of primes \cites{baier-zhao3, matomaki}. \newline

In \cite{Zha}, the third author also studied the large sieve inequality for $k$-power moduli, where $k>2$.  The best known result for these $k$-power moduli with $k >2$ is due to K. Halupczok \cite{Hal}, who gave a large sieve inequality for $k$-power moduli which is uniform in $k$. \newline

{\bf Acknowledgement.} The first author would like to thank the University of New South Wales (UNSW) for its financial support and hospitality during his pleasant stay at the School of Mathematics and Statistics. The second author thanks the following organisations for his Research Training Program scholarship: the Department of Education and Training, Australian Government and the School of Mathematics and Statistics, UNSW.  The third author was supported by the FRG grant PS43707 and the Faculty Silverstar Fund PS49334 at UNSW during this work.

\section{Proof of Theorem \ref{main}}
We first establish the following lower bound for the number of Farey fractions with square denominators near 
certain rational points. 

\begin{lemma} \label{Farey} Let $\varepsilon>0$ and $p_1,...,p_m$ be the first $m$ odd primes. 
Set $Q:=p_1\cdots p_m$ and 
\begin{equation} \label{SQ}
\mathcal{S}(Q):=\left\{ (a,q)\in \mathbb{N}\times \mathbb{N} : Q<q\le 2Q, \ 1\le a\le q^2,\ (a,q)=1,\ \left|\frac{a}{q^2}-
\frac{1}{Q}\right| \le \frac{1}{Q^3} \right\}.
\end{equation}
Then
\begin{equation} \label{count}
\sharp\mathcal{S}(Q)\ge Q^{\frac{\log 2}{(1+\varepsilon)\log \log Q}},  
\end{equation}
provided $m$ is sufficiently large.
\end{lemma}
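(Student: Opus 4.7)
The plan is to produce an explicit family of pairs in $\mathcal{S}(Q)$ by choosing $q$ so that $q^2$ is extremely close to a multiple of $Q$. Rewriting the condition $|a/q^2-1/Q|\le 1/Q^3$ as $|aQ-q^2|\le q^2/Q$, I note that on the range $Q<q\le 2Q$ the right-hand side is at most $4Q$. The simplest way to make $|aQ-q^2|$ tiny is to force $q^2\equiv 1\pmod Q$ and then set $a=(q^2-1)/Q$; this yields $|a/q^2-1/Q|=1/(q^2Q)\le 1/Q^3$ automatically, the factorisation $aQ=(q-1)(q+1)$ together with $\gcd(q\pm 1,q)=1$ gives $\gcd(a,q)=1$, and $1\le a<q^2$ is clear.

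The key input is then the abundance of $q\in(Q,2Q]$ satisfying $q^2\equiv 1\pmod Q$. Since $Q=p_1\cdots p_m$ is squarefree with every $p_i$ odd, the Chinese Remainder Theorem says that $q^2\equiv 1\pmod Q$ is equivalent to $q\equiv\pm 1\pmod{p_i}$ for each $i=1,\dots,m$, and the two residues $\pm 1$ are distinct modulo every $p_i\ge 3$. This yields exactly $2^m$ residue classes modulo $Q$, and the half-open interval $(Q,2Q]$ of length $Q$ contains precisely one representative of each class, so $\sharp\mathcal{S}(Q)\ge 2^m$.

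To convert $2^m$ into the claimed power of $Q$, I invoke the prime number theorem: $\log Q=\sum_{i=1}^{m}\log p_i\sim p_m\sim m\log m$, so $\log\log Q\sim \log m$ and hence $m\log\log Q\sim\log Q$. Consequently, for any fixed $\varepsilon>0$ and all sufficiently large $m$, we have $m\ge \log Q/((1+\varepsilon)\log\log Q)$, which after exponentiating base $2$ yields $2^m\ge Q^{(\log 2)/((1+\varepsilon)\log\log Q)}$ as required. The whole argument is fairly direct once the device $q^2\equiv 1\pmod Q$ is spotted; the only mildly delicate point is making the prime number theorem asymptotic precise enough to absorb the factor $1+\varepsilon$, but this is entirely routine and I do not expect a genuine obstacle.
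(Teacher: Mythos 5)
Your proof is correct and follows essentially the same route as the paper: force $q^2\equiv 1\pmod Q$ via the Chinese Remainder Theorem to get $2^m$ admissible $q$ in $(Q,2Q]$, take $a=(q^2-1)/Q$, and convert $2^m$ into the stated power of $Q$ with the prime number theorem. Your explicit verification that $\gcd(a,q)=1$ via $aQ=(q-1)(q+1)$ is a small detail the paper leaves implicit, but the argument is the same.
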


Here we note that the expected number of Farey fractions of the form $a/q^2$ with $Q<q\le 2Q$, $1\le a\le q^2$, 
$(a,q)=1$ in an interval of length $\Delta$ is, heuristically, of order of magnitude $Q^3\Delta$. So the above Lemma \ref{Farey}
shows that under certain circumstances, the true number can exceed the expectation significantly. \\

\begin{proof}[Proof of Lemma \ref{Farey}] Using the Chinese Remainder Theorem, the number of solutions to the congruence
 $$
 q^2\equiv 1 \pmod{Q}
 $$
 with $Q<q\le 2Q$ is exactly $2^m$. If $q$ solves the above congruence, then
 $$
 q^2=1+aQ
 $$
 for some $a$ with $1\le a\le q^2$ and $(a,q)=1$, and it follows that
 $$
 \left|\frac{a}{q^2}-\frac{1}{Q}\right| = \frac{1}{q^2Q}\le \frac{1}{Q^3}.
 $$
 Hence,
 $$
 \sharp\mathcal{S}(Q)\ge 2^m.
$$
Moreover, using the prime number theorem, for any given $\varepsilon>0$, 
$$
\log Q = \sum\limits_{i=1}^m \log p_i \le (1+\varepsilon)p_m \le (1+2\varepsilon)m\log m
$$
if $m$ is sufficiently large. Consequently, for any given $\varepsilon>0$, 
$$
m\ge \frac{\log Q}{(1+\varepsilon)\log \log Q}
$$
if $m$ is large enough. Now the desired inequality \eqref{count} follows.
\end{proof}

Having proved Lemma \ref{Farey}, we are ready to prove Theorem \ref{main}. It will suffice to prove
\eqref{goal} with the summation range $1\le q\le Q$ replaced by $Q<q\le 2Q$ which we do in the following. 
Set $Q=p_1\cdots p_m$ as in 
Lemma \ref{Farey}. Further, set 
$$
M:=0, \quad N:=\frac{Q^3}{9}, \quad 
a_n: = e\left(-\frac{n}{Q}\right).
$$
Then
$$
\sum_{n=M+1}^{M+N} a_n e \left( \frac{an}{q^2} \right) = 
\sum_{n=1}^{N} e \left( \alpha_n\right)
$$
with 
$$
\alpha_n:=n\left(\frac{a}{q^2}-\frac{1}{Q}\right).
$$
If 
$$
\left|\frac{a}{q^2}-\frac{1}{Q}\right| \le \frac{1}{Q^3},
$$
then $|\alpha_n|\le 1/9$ for $n=1,...,N$ and hence
\begin{equation} \label{nocancel}
\left|\sum_{n=1}^{N} e \left( \alpha_n\right)\right| \ge CN
\end{equation}
for some absolute positive constant $C$. \newline

Define $\mathcal{S}(Q)$ as in \eqref{SQ}.
Then we have
\begin{equation*}
\begin{split}
& \sum_{q=Q+1}^{2Q} \sum_{\substack{a=1 \\ (a,q)=1}}^{q^2} \left| \sum_{n=M+1}^{M+N} 
a_n e \left( \frac{an}{q^2} \right) \right|^2 \\ \ge &  
\sum_{(a,q)\in \mathcal{S}(Q)} \left| \sum_{n=M+1}^{M+N} 
a_n e \left( \frac{an}{q^2} \right) \right|^2 \\ \ge & \sharp \mathcal{S}(Q)\cdot (CN)^2\\ = &
C^2\cdot \sharp \mathcal{S}(Q) \cdot N \sum\limits_{n=M+1}^{M+N} |a_n|^2\\ 
= & \frac{C^2}{10} \cdot \sharp \mathcal{S}(Q) \cdot (Q^3+N) \sum\limits_{n=M+1}^{M+N} |a_n|^2\\ 
\ge & \frac{C^2}{10} \cdot Q^{\frac{\log 2}{(1+\varepsilon)\log \log Q}} \cdot (Q^3+N) \sum\limits_{n=M+1}^{M+N} |a_n|^2,
\end{split}
\end{equation*}
where the third line follows from \eqref{nocancel}, and the last line follows from
Lemma \ref{Farey}. This completes the proof. 

\bibliography{biblio.bib}
\bibliographystyle{amsxport}

\vspace*{.7cm}

\noindent Stephan Baier, Department of Mathematics, RKMVERI \newline
G.T. Road, Belur Math, Howrah, West Bengal, India-711202 \newline
Email: {\tt stephanbaier2017@gmail.com} \newline

\noindent Sean B. Lynch, School of Mathematics and Statistics, University of New South Wales \newline
UNSW-Sydney NSW 2052, Australia \newline
Email: {\tt s.b.lynch@unsw.edu.au} \newline

\noindent Liangyi Zhao, School of Mathematics and Statistics, University of New South Wales \newline
UNSW-Sydney NSW 2052, Australia \newline
Email: {\tt l.zhao@unsw.edu.au}
\end{document}